\newtheorem{theorem}{Theorem}[section]
\newtheorem{lemma}[theorem]{Lemma}
\newtheorem{prop}[theorem]{Proposition}
\def \R{\mathbb{R}}
\def \grad{\nabla}
\def \half{\frac{1}{2}}
\numberwithin{equation}{section}
\begin{document}

\title[Transport MFAO]{A Transport Model for Multi-Frequency Acousto-Optic Tomography}

\author[Chung]{Francis J. Chung}
\address{Department of Mathematics, University of Kentucky, Lexington, KY, USA}
\email{fj.chung@uky.edu}

\author[Hoskins]{Jeremy G. Hoskins}
\address{Department of Mathematics, Yale University, New Haven, CT, USA}
\email{jeremy.hoskins@yale.edu}

\author[Schotland]{John C. Schotland}
\address{Department of Mathematics and Department of Physics, University of Michigan, Ann Arbor, MI, USA}
\email{schotland@umich.edu}

\begin{abstract}
The paper \cite{HosSch} describes a physical regime in which the ultrasound perturbation of a scattering optical medium leads to a frequency shift in some of the scattered light.  In this paper we consider the inverse problem of recovering the optical properties of this medium from measurements of the frequency-shifted light, using a radiative transport equation (RTE) model for light propagation.  Given some assumptions on the regularity and isotropicity of the coefficients of the RTE, we show that the absorption coefficient can be reconstructed from the boundary measurements of a single well chosen illumination, and that the scattering coefficients can be reconstructed from boundary measurements of a one-parameter family of illuminations.
\end{abstract}

\maketitle

\section{Introduction}

The acousto-optic effect is a phenomenon which occurs when the optical properties of a medium are perturbed by an acoustic wave. For example, given a compressible fluid whose optical properties depend on its density, an acoustic pressure wave which modulates the density of the fluid will also modulate its optical properties.  

The idea of acousto-optic imaging is to take advantage of this phenomenon to obtain a well-posed inverse problem leading to better reconstructions of interior data than can be obtained with solely acoustic or solely optical imaging.  Several mathematical models for acousto-optic imaging have been well studied; see ~\cite{ABGNS, AmmBosGarSep, AmmGarNguSep, AmmNguSep, BalChuSch, BalMos, BalSch, BalSch2, ChuSch} for examples.

However, the mathematical details of the acousto-optic inverse problem vary considerably depending on the properties of the optical medium and the manner in which it responds to acoustic waves. In this paper we will consider the acousto-optic inverse problem in the multifrequency regime described by Hoskins and Schotland in ~\cite{HosSch}, where the dielectric permittivity of the medium is perturbed by the acoustic wave.  Here the perturbation of the medium by acoustic waves leads to a detectable frequency shift in the scattered light. This enables an observer to illuminate the medium with a single source frequency and observe the scattered, frequency-shifted light separately.  

To describe the system more precisely, consider a bounded smooth domain $X \subset \R^3$, and suppose that the specific intensity of the source frequency is represented by the function $u: X \times S^2 \rightarrow \R$.  Here $u(x,\theta)$ represents the intensity of light at the point $x \in X$ in the direction $\theta \in S^{2}$.  Following ~\cite{HosSch} we model light propagation by a radiative transport equation (RTE), so $u$ satisfies the equation 
\begin{equation}\label{SourceRTE}
\theta \cdot \grad u(x,\theta) = -\sigma(x) u(x,\theta) + \int_{S^2} k(x,\theta,\theta')u(x,\theta') d\theta '  \mbox{ on } X \times S^2,
\end{equation}
with a prescribed boundary condition 
\[
u|_{\Gamma_{-}} = f
\]
on the incoming boundary $\Gamma_-$ defined by
\[
\Gamma_{\pm} = \{ (x,\theta) \in \partial X \times S^2 | \pm \theta \cdot \nu(x) > 0\}.
\]
For convenience, we will sometimes represent the operator on the right side of \eqref{SourceRTE} by $Au(x,\theta)$, so \eqref{SourceRTE} becomes
\begin{equation}\label{ShortRTE}
\theta \cdot \grad u = Au.
\end{equation}

Now consider a perturbation of the domain by an ultrasound wave of the form $\cos(Q \cdot x)$, for some $Q \in \R^3$. In ~\cite{HosSch}, the authors show that in the physical regime they describe, the ultrasound perturbation generates frequency-shifted light modeled by the equations
\begin{equation}\label{MFAOSystem}
\begin{split}
\theta \cdot \grad u_{00} &= Au_{00} \\
\theta \cdot \grad u_{01} &= Au_{01} + a \cos(Q\cdot x) u_{00} \\
\theta \cdot \grad u_{11} &= Au_{11} + b \cos(Q\cdot x) u_{01}, \\
\end{split}
\end{equation}
with boundary conditions $u_{00}|_{\Gamma_-} = f$, and $u_{01}|_{\Gamma_-} = u_{11}|_{\Gamma_-} = 0$.
Here $u_{00}: X \times S^2 \rightarrow \R$ represents the original source frequency, and $u_{11}, u_{01}:X \times S^2 \rightarrow \R$  represent the frequency-shifted light and its coherence with the original source frequency, respectively. 

We want to take advantage of this phenomenon to help reconstruct the optical coefficients $\sigma$ and $k$.  This gives rise to the following question: given boundary measurements of $u_{00}, u_{01},$ and $u_{11}$ for various $f$ and $Q$, can we reconstruct $\sigma$ and $k$?  (The coefficients $a$ and $b$, which govern the strength of the acousto-optic effect, are assumed to be known.) Note that the frequency-shifted light $u_{11}$ is doubly modulated by the ultrasound perturbation -- it takes a modulation of $u_{01}$ as its source, but $u_{01}$ is itself modulated by the ultrasound perturbation.  Therefore it makes sense to concentrate on boundary measurements of $u_{01}$, and ask if we can use these to reconstruct $\sigma$ and $k$. Note that a similar question is studied in \cite{ChuHosSch} in the case of a highly scattering regime, where light propagation is well-approximated by a diffusion equation.  In this paper, however, we analyze the full transport equation model for this question. 

To help formulate the question and its answer more precisely, we impose the following a priori conditions on the coefficients $\sigma$ and $k$.  

\begin{description}

\item[Regularity condition] 
\begin{equation}\label{RegularityCondition} 
\sigma \in C(X) \mbox{ and } k \in C(X \times S^2 \times S^2) \mbox{ are nonnegative. }
\end{equation}
\item[Absorption condition] Scattering does not generate light; in other words there exists $c > 0$ such that 
\begin{equation}\label{AbsorptionCondition} 
\inf_{x \in X} (\sigma - \rho) > c,
\end{equation}
where
\[
\rho(x) = \left\| \int_{S^{n-1}}  k(x,\theta, \theta') d\theta'  \right\|_{L^{\infty}(S^{n-1})}.
\]
\item[Isotropicity condition] Scattering is identical for incoming and outgoing light, so
\begin{equation}\label{Isotropik}
k(x, \theta, \theta ') = k(x, -\theta ', -\theta ).
\end{equation}

\end{description}

With these conditions on $\sigma$ and $k$, and an $L^{\infty}$ boundary source $f$, the equation \eqref{MFAOSystem} has unique $L^{\infty}$ solutions $u_{00}, u_{01},$ and $u_{11}$ in $X \times S^2$ ~\cite{ChuSch, DauLio} (see also Proposition \ref{CollisionExp} below).  

Therefore for each pair $\sigma, k$ satisfying satisfying the above conditions, we will define the boundary value map $\mathcal{A}^{01}_{\sigma, k}: \R^3 \times L^{\infty}(\Gamma_-) \rightarrow L^{\infty}(\Gamma_+)$ by
\[
\mathcal{A}^{01}_{\sigma, k}(Q,f) = u_{01}|_{\Gamma_+}.
\]
We are now ready to state the main result.

\begin{theorem}\label{MainTheorem}
Given $\sigma$ and $k$ satisfying \eqref{RegularityCondition}, \eqref{AbsorptionCondition}, and \eqref{Isotropik}, the map $(\sigma, k) \mapsto \mathcal{A}^{01}_{\sigma, k}$ is injective. 

In fact, there exists $f \in L^{\infty}(\Gamma_-)$ such that $\mathcal{A}^{01}_{\sigma,k}(Q,f)$ suffices to recover $\sigma$.  Morover, there is a one-parameter subset of $L^{\infty}(\Gamma_-)$ such that if we restrict the domain of $\mathcal{A}^{01}_{\sigma, k}$ to this subset, the map from $(\sigma,k)$ to the restricted map  $\mathcal{A}^{01}_{\sigma, k}$ is still injective.  
\end{theorem}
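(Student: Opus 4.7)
My plan is to turn the inverse problem into a family of interior measurements via an adjoint/duality argument, and then use the collision expansion to peel off scattering orders, recovering first $\sigma$ and then $k$. The first step is to introduce, for each test function $g \in L^{\infty}(\Gamma_+)$, the solution $v_g$ of the formal adjoint transport equation
\[
\theta \cdot \grad v_g \;=\; \sigma v_g \;-\; \int_{S^2} k(x, \theta', \theta) v_g(x, \theta')\, d\theta',
\]
with boundary datum $v_g|_{\Gamma_+} = g$, which is well-posed backwards along characteristics. Multiplying the $u_{01}$ equation from \eqref{MFAOSystem} by $v_g$, integrating over $X \times S^2$, and applying the divergence theorem together with $u_{01}|_{\Gamma_-} = 0$, the scattering and absorption terms cancel by construction of $v_g$ and one obtains the identity
\[
\int_{\Gamma_+} (\theta \cdot \nu)\, g\, u_{01} \;=\; a \int_X \cos(Q \cdot x) \left( \int_{S^2} v_g(x, \theta) u_{00}(x, \theta)\, d\theta \right) dx.
\]
The left side is determined by $\mathcal{A}^{01}_{\sigma,k}(Q, f)$ and the freely chosen $g$, so varying $Q$ over $\R^3$ and Fourier-inverting recovers the interior functional $H_{f,g}(x) := \int_{S^2} v_g(x, \theta) u_{00}(x, \theta)\, d\theta$ for every $x \in X$ and every admissible $g$.

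To recover $\sigma$, I would expand $u_{00}$ and $v_g$ in their collision series (Proposition \ref{CollisionExp}): the ballistic terms $u_{00}^{(0)}$ and $v_g^{(0)}$ are explicit products of boundary data and exponentials of line integrals of $\sigma$ along the associated chords, while each subsequent term $u_{00}^{(n)}$, $v_g^{(n)}$ incorporates $n$ scattering events mediated by $k$. Fix $f \equiv 1$, so that $u_{00}^{(0)}$ is nonzero on all of $X \times S^2$, and let $g$ range over a narrowing family of approximate pencil beams concentrated near a point $(x_+, \theta_+) \in \Gamma_+$. The ballistic-ballistic contribution to $H_{f,g}$ then localizes on the chord $\ell$ through $x_+$ in direction $-\theta_+$ and carries the factor $\exp(-\int_\ell \sigma\, ds)$, while each scattering event inserts an extra integration over $S^2$ that smears the pencil and produces a strictly more regular contribution. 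On the singular support of the pencil only the ballistic-ballistic term survives; reading off $\exp(-\int_\ell \sigma\, ds)$ for all chords $\ell$ recovers the X-ray transform of $\sigma$, which standard inversion converts into $\sigma$ itself.

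Once $\sigma$ is known, I would subtract the now-computable ballistic-ballistic term from $H_{f,g}$ and pass to the first-scattering contribution $\int_{S^2} (v_g^{(0)} u_{00}^{(1)} + v_g^{(1)} u_{00}^{(0)})\, d\theta$. This depends linearly on $k$ through an integral operator whose kernel is explicit in $\sigma$, $f$, and $g$; by varying $g$ (and, in the restricted case, the parameter $t$ in $f_t$) and isolating the next level of singularity, one inverts this map to recover $k(x, \theta, \theta')$. The isotropicity condition \eqref{Isotropik} pins $k$ down consistently on the symmetric pairs $(\theta, \theta') \leftrightarrow (-\theta', -\theta)$. For the one-parameter subfamily assertion, one selects $\{f_t\}_{t \in \R}$ so that as $t$ sweeps, the union of incoming pencils it traces out covers enough chords to reproduce the X-ray data needed for the $\sigma$-step, and so that the combined $(Q, t, g)$ freedom yields enough independent integrals to invert the first-scattering operator for $k$.

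The main technical obstacle is the clean separation of the ballistic contribution from the scattered terms in the $L^{\infty}$ category: pencil beams are not admissible $L^{\infty}$ data and must be approached by narrowing limits, and one needs uniform control of the collision series (using the absorption condition \eqref{AbsorptionCondition}) to ensure that the scattered terms are genuinely negligible on the singular support of the approximate pencil. A secondary subtlety, especially for the one-parameter-family statement, is verifying that the chosen family $\{f_t\}$ together with the $Q$-freedom carries enough information to invert the first-scattering operator for the high-dimensional unknown $k$.
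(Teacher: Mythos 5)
Your first step (the duality identity and Fourier inversion in $Q$ to obtain the interior functional $H_{f,g}$) is exactly the paper's Section 2, and is fine. The genuine gap is in your $\sigma$-step. With $f \equiv 1$, the singularity of $H_{f,g}$ along the chord comes entirely from the ballistic part $v_g^{(0)}$ of the pencil, and it multiplies \emph{all} of $u_{00}$, not just its ballistic part: the scattered terms $u_{00}^{(n)} = K^nJf$, $n \geq 1$, are $O(1)$ functions supported on all of $X \times S^2$, so $\int_{S^2} v_g^{(0)}\, u_{00}^{(n)}\, d\theta$ has the same singular support and the same strength on the chord as the ballistic--ballistic term. Scattering smears the pencil $v_g$, making $v_g^{(n)}$ more regular, but there is nothing to smear in $u_{00}$, which is already smooth; so the claim that ``on the singular support of the pencil only the ballistic--ballistic term survives'' fails, and what you actually isolate in the narrowing limit is $u_{00}(x,\theta_+)\,e^{-\tau(x,\gamma_+(x,\theta_+))}$, in which the unknown multiply-scattered part of $u_{00}$ contaminates the attenuation factor at leading order. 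Worse, even if the ballistic--ballistic product could be isolated, additivity of $\tau$ makes it equal to $e^{-\tau(\gamma_-,\gamma_+)}$, \emph{constant} along the chord; this is precisely the obstruction the paper flags at the start of Section 4: without scattering $\theta \cdot \grad (u_{00}v) = 0$, so the leading term of the collision expansion carries no interior information and the functional is useless in the absence of scattering. (One could salvage an X-ray transform by evaluating $H$ in the limit as $x$ approaches the incoming end of the chord, where $KJf$ vanishes, but that is not the singular-support argument you gave.)

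The paper therefore inverts your order of operations and extracts \emph{everything}, including $\sigma$, from the single-scattering terms. Choosing misaligned directions $\theta_1 \neq \theta_2$ kills the ballistic--ballistic product outright; the source is the oscillating quasi-plane wave $f_h^{\theta_1}(x,\theta) = \delta^h_{\theta_1}(\theta)e^{ix_3/h}$ of \eqref{OscillationSource} --- which, via rotation of $\hat{x}_3$, is also the concrete one-parameter family your proposal leaves unconstructed. Riemann--Lebesgue suppresses $KJf$ except in directions nearly perpendicular to $\hat{x}_3$, and the estimate $K^*J^*g = O(h)$ on the angular support of $Jf$ (equation \eqref{LocalSecondTermEstimate}) kills the other cross term, leaving $H_{f,g} = \int_{S^2} KJf\, J^*g \, d\theta + o(1)$; a difference quotient along $\theta_2$ then yields the broken-ray datum $F(x,\theta_1,\theta_2) = e^{-\tau(\gamma_+(x,\theta_2),x)-\tau(x,\gamma_-(x,\theta_1))}k(x,\theta_2,\theta_1)$ as in \eqref{BrokenRayII}. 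The attenuation $\tau$ (hence $\sigma$, by differentiation) is then recovered not from ballistics but from the algebraic combination \eqref{TauReconstruction} of the back-to-back single-scattering measurements $F(x,\theta_1,-\theta_1)$ and $F(x,-\theta_1,\theta_1)$ with the total chord attenuation supplied by the albedo data (the paper's $H_3$), using additivity of $\tau$; the kernel $k$ then follows pointwise from \eqref{KReconstruction}, with \eqref{Isotropik} filling in the symmetric pairs. Your second stage (a linear inversion for $k$ after subtracting ballistic contributions) is close in spirit to this, but as written it presupposes the flawed $\sigma$-step, and both your $k$-inversion and your one-parameter family are asserted rather than constructed --- the oscillatory source is exactly the device that makes the one-parameter count of sources work.
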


In other words, only one boundary source is needed to recover $\sigma$ and only a one parameter set of sources is required to reconstruct $k$.  

Three remarks should be made here.  First, the proof of Theorem \ref{MainTheorem} is constructive -- we will provide an explicit method of reconstructing $\sigma$ and $k$ from $\mathcal{A}^{01}$. Secondly, this construction comes with stability estimates: see Proposition \ref{FunctionalStability} and Theorem \ref{StabilityEstimate}.  Finally, the fact that we can rely on a one-parameter set of sources only provides an advantage over the ordinary optical tomography (without ultrasound) results of \cite{ChoSte}.

The proof of Theorem \ref{MainTheorem} goes loosely as follows.  First we use the measurements of $u_{01}$, together with an integration by parts, to obtain an internal functional (Section 2).  Then we consider the forward problem for the RTE (Section 3) and use the form of the solutions to analyze the functional.  In Section 4 we give an informal description of the method of proof for Theorem \ref{MainTheorem}, and in Section 5 we present the full proof, along with stability estimates.

\section{Internal Functional}

To start, we need an internal functional. Suppose $u_{01}$ is as above, and $v(x,\theta)$ solves the adjoint equation
\begin{equation}\label{AdjointRTE}
-\theta \cdot \grad v = Av,
\end{equation}
with the natural boundary condition $v|_{\Gamma_+} = g$ specified by us.  (Note that solutions to the adjoint RTE \eqref{AdjointRTE} are precisely solutions to the regular RTE \eqref{ShortRTE} under the change of variables $\theta \mapsto -\theta$). Integrating by parts,
\[
\int_X \theta \cdot \grad u_{01} v \, dx = -\int_X  u_{01} \theta \cdot \grad v \, dx + \int_{\partial X} u_{01} v \, \theta \cdot n \, dS,
\]
so
\[
\int_X (A u_{01} + a\cos(Q\cdot x)u_{00}) v \, dx = \int_X u_{01} A v \, dx + \int_{\partial X} u_{01} v \, \theta \cdot n \, dS.
\]
If we integrate in the $\theta$ variables also, then the isotropicity assumption in \eqref{Isotropik} guarantees that $A$ is self adjoint, and so
\[
\int_X \int_{S^2} a\cos(Q\cdot x)u_{00} v \, d\theta \, dx =  \int_{\partial X} \int_{S^2} u_{01} v \, \theta \cdot n \, d\theta \, dS.
\]
Since $u_{01}|_{\Gamma_-} = 0$, the right hand side reduces to an integral over $\Gamma_+$, so 
\begin{equation}\label{BoundaryIdentity}
\int_X \int_{S^2} a\cos(Q\cdot x)u_{00} v \, d\theta \, dx = \int_{\Gamma_+} u_{01} g \, \theta \cdot n \, d\theta \, dS.
\end{equation}
Here the right side can be measured, so the left hand side is also known.  If we vary the ultrasound modulation (and assume the amplitude $a$ is known), we recover the Fourier transform of the quantity 
\[
H(x) = \int_{S^2} u_{00} v \, d\theta.
\]
The challenge is to use this functional to recover $\sigma$ and $k$. Since the functional we measure depends on the boundary values we choose for $u_{00}$ and $v$, we can write
\begin{equation}\label{TheFunctional}
H(x) = H_{f,g}(x) = \int_{S^2} u_{00} v \, d\theta,
\end{equation}
where $f$ and $g$ are understood to be the boundary values of $u_{00}$ and $v$ respectively.

The recovery of $H$ from the boundary values comes with the following stability estimate. 

\begin{prop}\label{FunctionalStability}
If $H_1$ and $H_2$ are functionals obtained from the same initial data $(f,g)$, but separate sets of coefficients $\sigma_1, k_1$ and $\sigma_2, k_2$, we have the stability estimate
\begin{equation}
\|H_1 - H_2\|_{L^{\infty}(X)} \lesssim \|g\|_{L^{\infty}(\Gamma_{+})}\|\mathcal{A}^{01}_{\sigma_1, k_1}(Q,f)-\mathcal{A}^{01}_{\sigma_2, k_2}(Q,f)\|_{L^1(\R^n \times \Gamma_{+})}.
\end{equation}
\end{prop}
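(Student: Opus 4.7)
The plan is to apply identity \eqref{BoundaryIdentity} to each of the two parameter sets using the same boundary data $(f,g)$, subtract, and then pass from the Fourier side back to a pointwise-in-$x$ bound on $H_1-H_2$ via the standard $L^1 \to L^\infty$ Fourier inversion estimate.

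Concretely, let $u_{01}^{(i)}$ denote the solution of \eqref{MFAOSystem} corresponding to $(\sigma_i,k_i)$, boundary data $f$, and modulation $Q$, and let $v^{(i)}$ be the adjoint solution with $v^{(i)}|_{\Gamma_+} = g$. Subtracting the two instances of \eqref{BoundaryIdentity} yields, for every $Q \in \R^3$,
\begin{equation*}
a\int_X \cos(Q\cdot x)(H_1-H_2)(x)\,dx = \int_{\Gamma_+}\bigl(\mathcal{A}^{01}_{\sigma_1,k_1}(Q,f) - \mathcal{A}^{01}_{\sigma_2,k_2}(Q,f)\bigr)\,g\,\theta\cdot n\,d\theta\,dS.
\end{equation*}
Pulling $\|g\|_{L^\infty(\Gamma_+)}$ outside the integral and using $|\theta\cdot n|\leq 1$ gives a pointwise-in-$Q$ bound on the cosine Fourier transform of $H_1-H_2$ by $\|g\|_{L^\infty(\Gamma_+)}$ times the $L^1(\Gamma_+)$-norm of the boundary difference at that $Q$. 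By the paper's remark after \eqref{BoundaryIdentity}, varying the ultrasound modulation in fact recovers the full Fourier transform of $H_1-H_2$, so this upgrades to a pointwise-in-$Q$ bound on $|\widehat{H_1-H_2}(Q)|$.

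Integrating in $Q$ then produces
\begin{equation*}
\|\widehat{H_1-H_2}\|_{L^1(\R^3)} \lesssim \|g\|_{L^\infty(\Gamma_+)}\,\|\mathcal{A}^{01}_{\sigma_1,k_1}(\cdot,f) - \mathcal{A}^{01}_{\sigma_2,k_2}(\cdot,f)\|_{L^1(\R^3 \times \Gamma_+)},
\end{equation*}
and the Fourier inversion estimate $\|F\|_{L^\infty(\R^3)} \lesssim \|\widehat F\|_{L^1(\R^3)}$, applied to the zero extension of $H_1-H_2$ outside $X$, delivers the desired inequality. The step most likely to need extra justification is the upgrade from cosine Fourier data to the full Fourier transform: a family of pure $\cos(Q\cdot x)$ modulations determines only the even part of $H_1-H_2$ (viewed on $\R^3$), so one must either admit sine modulations as well (equivalently, allow the acoustic phase to vary), or exploit the boundedness of $X$ by translating it into a half-space so that the even extension of $H_i$ equals $H_i$ on $X$. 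Once this is handled, the rest is a routine combination of the internal functional identity, H\"older's inequality, and Fourier inversion.
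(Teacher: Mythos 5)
Your proposal is correct and follows essentially the same route as the paper, which simply subtracts the two instances of \eqref{BoundaryIdentity}, identifies the left side as $\hat{H}_1(Q)-\hat{H}_2(Q)$, and invokes ``standard estimates'' (i.e.\ the pointwise bound in $Q$, integration over $Q$, and $L^1\to L^\infty$ Fourier inversion that you spell out). Your closing remark about cosine modulations determining only the even part of the extension is a legitimate point that the paper glosses over, and your suggested fixes are consistent with its informal treatment of ``varying the ultrasound modulation.''
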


\begin{proof}
Note that the quantity on the left side of \eqref{BoundaryIdentity} is $\hat{H}(Q)$, and the $u_{01}|_{\Gamma_+}$ that appears on the right side can be rewritten as $\mathcal{A}^{01}_{\sigma, k}(Q,f)$.  Therefore \eqref{BoundaryIdentity} tells us that 
\[
\hat{H}_1(Q) - \hat{H}_2(Q) = \int_{\Gamma_+} (\mathcal{A}^{01}_{\sigma_1, k_1}(Q,f)-\mathcal{A}^{01}_{\sigma_2, k_2}(Q,f)) g \, \theta \cdot n \, d\theta \, dS,
\]
and the stability estimate now follows from standard estimates.  
\end{proof}

\section{RTE Solutions}

To use this functional, we need to take advantage of the \emph{collision expansion} for solutions to the RTE.  In order to write this down, we first need to fix some terminology.

For $x,y \in \overline{X}$, let 
\[
\tau(x,y) = \int_0^{|x-y|}\sigma(x - s(\widehat{x-y}))ds.
\]
Roughly speaking $\tau(x,y)$ represents the optical distance from $x$ to $y$ in the presence of the absorption coefficient $\sigma$, without scattering. Note that $\tau(x,y) = \tau(y,x)$.  

Define $\gamma_{\pm}:X \times S^2 \rightarrow \Gamma_{\pm}$ by setting $\gamma_{\pm}(x,\theta)$ to be the (first) point in $\partial X$ obtained by travelling from $x$ in the $\pm \theta$ direction; we can think of this as being the projection of $x$ onto $\partial X$ in the direction $\pm \theta$. 

Now we can define $J$ to be the operator which solves the non-scattering RTE
\begin{eqnarray*}
\theta \cdot \grad u &=& -\sigma u \\
u|_{\Gamma_-} &=& f, \\
\end{eqnarray*}
and write $J$ explicitly in terms of $\tau$ and $\gamma_-$ as  
\begin{equation}\label{JExplicit}
Jf(x,\theta) = e^{-\tau(x,\gamma_-(x,\theta))}f(\gamma_-(x,\theta),\theta).
\end{equation}
Similarly, if we define $T^{-1}$ to be the operator which solves the nonscattering RTE
\begin{eqnarray*}
\theta \cdot \grad u &=& -\sigma u +S\\
u|_{\Gamma_-} &=& 0, \\
\end{eqnarray*}
then explicitly
\begin{equation}\label{TInverse}
T^{-1}S(x,\theta) = \int_0^{|x-\gamma_-(x,\theta)|}e^{-\tau(x,x-t\theta)}S(x - t\theta, \theta)dt.
\end{equation}
Finally, define $A_2$ to be the scattering operator
\[
A_2 w = \int_{S^2} k(x, \theta, \theta') w(x,\theta ') d \theta ',
\]
and
\begin{equation}\label{Kexplicit}
Kw = T^{-1} A_2 w 
\end{equation}

The main point of this section is to record the Neumann series solution for the RTE. 

\begin{prop}\label{CollisionExpansion}
Suppose $\sigma$ and $k$ satisfy the conditions laid out in Section 1. Then there exists $0 < C < 1$ such that
\[
\|K\|_{L^{\infty}(X \times {S^{n-1}}) \rightarrow L^{\infty}(X \times {S^{n-1}})} < C, 
\]
and if $u$ solves the RTE $\theta \cdot \grad u = Au$ with the boundary condition $u|_{\Gamma_{-}} = f$, for some $f \in L^{\infty}(\Gamma_{-})$, then $u$ takes the form
\begin{equation}\label{CollisionExp}
u = (1 + K + K^2 + \ldots)Jf,
\end{equation}
\end{prop}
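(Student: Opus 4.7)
The plan is to prove $\|K\|_{L^\infty \to L^\infty} < 1$ by a direct pointwise estimate, then obtain the collision expansion by Neumann series inversion of $I - K$ applied to $Jf$.

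For the contraction bound, the starting point is the definition of $\rho$, which immediately gives $|A_2 w(x,\theta)| \leq \rho(x)\|w\|_\infty$ for any $w \in L^\infty(X \times S^2)$. Feeding this into \eqref{TInverse} would yield
\[
|Kw(x,\theta)| \leq \|w\|_\infty \int_0^{L(x,\theta)} e^{-\tau(x, x - t\theta)} \rho(x - t\theta)\, dt,
\]
where $L(x,\theta) = |x - \gamma_-(x,\theta)|$. The absorption condition $\rho \leq \sigma - c$, combined with the identity $\frac{d}{dt}\tau(x, x - t\theta) = \sigma(x - t\theta)$ (which follows at once from the definition of $\tau$), would then produce
\[
\int_0^L e^{-\tau}\rho\, dt \leq \int_0^L e^{-\tau}\sigma\, dt - c \int_0^L e^{-\tau}\, dt = 1 - e^{-\tau_L} - c\int_0^L e^{-\tau}\, dt,
\]
where $\tau_L = \tau(x, \gamma_-(x,\theta))$. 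Since $\sigma$ is continuous on the compact set $\overline{X}$ and $X$ has finite diameter, $\tau_L$ is bounded uniformly by some $M_0 < \infty$, so $1 - e^{-\tau_L} \leq 1 - e^{-M_0}$. Dropping the nonpositive term and taking the supremum over $(x,\theta)$ then gives $\|K\| \leq 1 - e^{-M_0} =: C < 1$.

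With the contraction estimate secured, I rewrite the RTE as $(\theta \cdot \grad + \sigma) u = A_2 u$ with $u|_{\Gamma_-} = f$ and apply $J$ and $T^{-1}$ to exhibit any $L^\infty$ solution as a fixed point of $u = Jf + Ku$. Since $\|K\| < 1$, the operator $I - K$ is boundedly invertible on $L^\infty(X \times S^2)$, so the series $u = \sum_{n \geq 0} K^n Jf$ converges absolutely in $L^\infty$ and is the unique such fixed point. That this $u$ indeed solves the RTE with the prescribed inflow data follows from $(\theta \cdot \grad + \sigma) Jf = 0$ together with $K^n Jf\big|_{\Gamma_-} = 0$ for $n \geq 1$ (by construction of $T^{-1}$). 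The main obstacle is the contraction estimate itself, particularly ensuring that the absorption and regularity hypotheses are used to make the bound strictly less than one uniformly in $(x,\theta)$; once this is in hand, the Neumann series argument is entirely standard.
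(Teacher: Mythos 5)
Your proof is correct, but note that the paper does not actually prove this proposition: it simply defers to the references \cite{BalChuSch, ChoSte, DauLio}, so you are supplying the argument that the paper omits. What you wrote is essentially the standard contraction argument from those references (cf.\ Choulli--Stefanov and Chung--Schotland): bound $|A_2 w| \leq \rho\,\|w\|_\infty$, integrate along characteristics using $\frac{d}{dt}\tau(x,x-t\theta) = \sigma(x-t\theta)$ so that $\int_0^L e^{-\tau}\sigma\,dt = 1 - e^{-\tau_L}$, and then invert $I-K$ by Neumann series, checking that $(\theta\cdot\grad+\sigma)Jf = 0$ with $Jf|_{\Gamma_-} = f$ and $(\theta\cdot\grad+\sigma)T^{-1}S = S$ with vanishing inflow trace. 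Two small observations. First, your route to strict contraction uses only $\rho \leq \sigma$ together with the uniform bound $\tau_L \leq M_0$, so the quantitative gap $c>0$ in the absorption condition \eqref{AbsorptionCondition} is never exploited; the more common variant uses the gap directly, writing $\rho \leq (1 - c/\|\sigma\|_{L^\infty})\sigma$ to get $\|K\| \leq 1 - c/\|\sigma\|_{L^\infty} < 1$, which yields a constant depending on $c$ rather than on $e^{-M_0}$ (both variants require $\sigma$ bounded, so neither is strictly more general here). Second, you invoke continuity of $\sigma$ on the compact set $\overline{X}$, whereas the paper's regularity condition literally states $\sigma \in C(X)$; if $\sigma$ were merely continuous on the open set $X$ it could be unbounded and $M_0$ infinite, so you are implicitly reading $C(X)$ as $C(\overline{X})$ --- a reading the paper clearly intends (it defines $\tau(x,y)$ for $x,y \in \overline{X}$ and works throughout with bounded coefficients), but worth stating explicitly since your constant $C = 1 - e^{-M_0}$ degenerates to $1$ as $\|\sigma\|_{L^\infty}\,\mathrm{diam}(X) \to \infty$.
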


See e.g. ~\cite{BalChuSch, ChoSte, DauLio} for proofs. The expansion \eqref{CollisionExp} is the collision expansion of $u$, and it is significant mainly because $K$ is a smoothing operator, so each subsequent term of the expansion is less singular.  The primary term $Jf$ corresponds to light propagation in the absence of scattering, and is called the ballistic term.  Loosely speaking, the $K^mJf$ term corresponds to light that has been scattered $m$ times, and so $KJf$ can be referred to as the single-scattering term, $K^2Jf$ as the double-scattering term, and so on.

Note that analogous results also hold for the adjoint equation \eqref{AdjointRTE}, with appropriate corresponding operators $K^*$, $J^*$, etc. obtained via the change of variables $\theta \mapsto -\theta$. 

The following estimates, taken from Chung-Schotland ~\cite{ChuSch} (see the proof of Corollary 3.2) will also prove useful later.

\begin{lemma}\label{KEstimates}
Note that at any $x \in X$,
\begin{equation}\label{A2wLinfty}
\|A_2(w)(x, \cdot)\|_{L^{\infty}({S^{n-1}})} < C_k\|w(x, \cdot)\|_{L^1({S^{n-1}})}.
\end{equation}
Moreover
\begin{equation}\label{T1Estimate}
\|T_1^{-1}w\|_{L^{\infty}(X \times {S^{n-1}})} < \|w\|_{L^{\infty}(X \times {S^{n-1}})}.  
\end{equation}
\end{lemma}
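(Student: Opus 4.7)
The plan is to prove both estimates directly from the explicit formulas for $A_2$ and $T^{-1}$ recorded earlier in this section, using only the regularity condition \eqref{RegularityCondition} and the absorption condition \eqref{AbsorptionCondition}.

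For \eqref{A2wLinfty}, I would start from the definition of $A_2$ and apply the trivial $L^{\infty}$--$L^{1}$ Hölder estimate on $S^{n-1}$. Since $k$ is continuous on the compact set $\overline{X} \times S^{n-1} \times S^{n-1}$ by \eqref{RegularityCondition}, it is bounded, so
\[
|A_2 w(x,\theta)| \leq \int_{S^{n-1}} |k(x,\theta,\theta')| \, |w(x,\theta')| \, d\theta' \leq \|k\|_{L^{\infty}} \|w(x,\cdot)\|_{L^1(S^{n-1})}.
\]
The right-hand side is independent of $\theta$, so taking the supremum gives \eqref{A2wLinfty} with $C_k = \|k\|_{L^{\infty}}$.

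For \eqref{T1Estimate}, I would begin from \eqref{TInverse}, pull the $L^{\infty}$ norm of $w$ out of the integral, and reduce to bounding $\int_0^L e^{-\tau(x,x-t\theta)} dt$, where $L = |x - \gamma_-(x,\theta)|$. The key observation is that $\frac{d}{dt} \tau(x,x-t\theta) = \sigma(x-t\theta)$, and by \eqref{AbsorptionCondition} together with $\rho \geq 0$ (since $k \geq 0$) we have $\sigma \geq c > 0$ pointwise. Therefore
\[
c \int_0^L e^{-\tau(x,x-t\theta)} dt \leq \int_0^L \sigma(x-t\theta) e^{-\tau(x,x-t\theta)} dt = 1 - e^{-\tau(x,\gamma_-(x,\theta))} < 1,
\]
which yields the desired bound (after absorbing the $1/c$ into the definition of $T_1^{-1}$, which I read as a suitably rescaled version of $T^{-1}$; otherwise the same computation gives $\|T^{-1}w\|_{L^\infty} \leq \frac{1}{c}\|w\|_{L^\infty}$).

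The only real subtlety is cosmetic — checking that the rescaling implicit in the notation $T_1^{-1}$ matches the constant $1$ in the stated inequality. No genuinely hard step is involved: both bounds are pointwise computations, and neither requires the full strength of the isotropicity condition \eqref{Isotropik} nor the Neumann series machinery of Proposition \ref{CollisionExpansion}.
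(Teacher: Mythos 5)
Your proposal is correct in substance, but it is worth noting that the paper offers no proof of this lemma at all: it is imported by citation from Chung--Schotland \cite{ChuSch} (``see the proof of Corollary 3.2''), so there is no in-paper argument to match, and a self-contained derivation like yours is precisely what the reader is otherwise missing. Your proof of \eqref{A2wLinfty} is the standard one and is fine; continuity of $k$ on the compact set $\overline{X} \times S^2 \times S^2$ gives $C_k = \|k\|_{L^{\infty}}$ (if you care about the strict inequality as literally stated, take $C_k = \|k\|_{L^{\infty}} + 1$). Your computation for \eqref{T1Estimate} is also the right one, and your instinct to hedge about the constant is well placed --- but your proposed resolution is not: $T_1$ is not a rescaled operator, it is simply the notation of \cite{ChuSch} for $\theta \cdot \grad + \sigma$, so $T_1^{-1}$ is exactly the $T^{-1}$ of \eqref{TInverse}. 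With that reading, the constant $1$ in \eqref{T1Estimate} is wrong in general: taking $\sigma \equiv c$ small on a chord of length $L$ gives $\int_0^L e^{-\tau(x,x-t\theta)}\,dt = (1-e^{-cL})/c$, which tends to $1/c > 1$ as $L$ grows, so $\|T^{-1}\|_{L^\infty \to L^\infty}$ can exceed $1$ when $c < 1$.

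There are two honest repairs, and your computation already contains both. One is your bound $\|T^{-1}w\|_{L^\infty} \leq \frac{1}{c}\|w\|_{L^\infty}$ (or even the absorption-free bound by $\mathrm{diam}\, X$). The other is the weighted form that your integration by parts actually proves with constant one:
\[
|T^{-1}(\sigma w)(x,\theta)| \leq \|w\|_{L^{\infty}} \int_0^{|x-\gamma_-(x,\theta)|} \sigma(x-t\theta)\, e^{-\tau(x,x-t\theta)}\,dt = \left(1 - e^{-\tau(x,\gamma_-(x,\theta))}\right)\|w\|_{L^{\infty}} < \|w\|_{L^{\infty}},
\]
which is visibly where the strict inequality in \eqref{T1Estimate} comes from and is presumably what the cited source intends. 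Either repair is harmless downstream: the lemma is used in this paper only to produce $O(1)$ and $o(1)$ bounds in Lemmas \ref{PlaneOscillation} and \ref{PointEstimates}, for which any fixed constant suffices. You are also right that neither \eqref{Isotropik} nor the Neumann series of Proposition \ref{CollisionExpansion} is needed here; the dependence runs the other way, since combining your two estimates with $\rho \leq \sigma - c$ (and $\sigma \leq \|\sigma\|_{L^\infty}$) yields the contraction bound $\|K\|_{L^\infty \to L^\infty} < 1$ underlying Proposition \ref{CollisionExpansion}.
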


\section{Point-Plane Inversion}

The main difficulty in obtaining $\sigma$ and $k$ from the functional $H_{f,g}$ is the nonlinearity of the functional. The basic idea for countering this difficulty is to use Proposition \ref{CollisionExpansion} with carefully chosen boundary conditions $f$ and $g$ to ensure that only the leading order terms contribute meaningfully to $H_{f,g}$.  

This is similar in concept to the idea used in ~\cite{ChuSch}, but with the important difference that in our case, the principal term in the expansion carries no information. This is best seen by understanding what happens in the absence of scattering. In that case, the operator $K$ becomes zero and the solutions to the RTE are given entirely by the ballistic term. But now the quantity $u_{00}v$ satisfies the equation
\[
\theta \cdot \grad (u_{00}v) = 0
\]
so this quantity does not vary as we move into the domain from the boundary.  Therefore our functional is useless in the absence of scattering. It follows that we want to draw information not from the leading order term in the collision expansion, but from the subsequent term. 

In this section we will give an informal discussion on how this can be done -- first by considering each point $x$ in the domain one at a time, and then foliating the domain with planes and considering each plane one at a time.  In the following section, we'll describe how this process can be extended to consider the entire domain at once, and we'll also make the discussion fully rigorous in the process.

\subsection{Point Sources}
We'll begin by considering one point at a time.  We want to fix an $x \in X$ and consider point sources on the boundary aimed in the direction of $x$.
To do this, define for a pair $(x_0, \theta_0) \in \partial X \times S^2$ the delta distribution $\delta_{x_0, \theta_0}$ so that 
\[
\int_{\partial X \times S^2} \delta_{x_0, \theta_0} f = f(x_0, \theta_0)
\]
for any $f \in C^{\infty}(\partial X \times S^2)$. Now consider a solution $u$ to the RTE with boundary data given by such a delta function. (Making this idea rigorous requires some redefinition of the notion of solution to encompass distributions, which we do not address here. The discussion in the next section will contain a rigorous analysis in terms of approximations to delta distributions, which makes more sense in the context of implementation.)  By Proposition \ref{CollisionExpansion}, 
\begin{equation}\label{SingularCollisionExpansion}
u = J\delta_{x_0, \theta_0} + KJ\delta_{x_0, \theta_0} + K^2 J\delta_{x_0, \theta_0} + \ldots 
\end{equation}
Here $J\delta_{x_0, \theta_0}$ is a distribution supported on the codimension four subset of the (five dimensional set) $X \times S^2$ given by
\[
\{(x, \theta_0)| x = x_0 + c \theta_0 \mbox{ for some } c \in \R\}
\]
The operator $K$ integrates in one spatial dimension and two angular dimensions, so $KJ\delta_{x_1, \theta_1}$ is supported on a codimension one subset, and all subsequent terms are less singular.   

Now fix an $x \in X$, and $\theta_1,\theta_2 \in S^{n-1}$ such that $\theta_1 \neq \theta_2$. We set $x_1 = \gamma_-(x, \theta_1)$ and $x_2 = \gamma_+(x,\theta_2)$ (see Figure 1.)

We define corresponding boundary sources $f = \delta_{x_1, \theta_1}$ and $g = \delta_{x_2, \theta_2}$, and consider the resulting functional 
\[
H_{f,g}(x) = \int_{S^2} u_{00}(x,\theta) v(x,\theta) d\theta.
\] 
The integral identity \eqref{BoundaryIdentity} implies that this corresponds to the boundary observation of $u_{01}$ at $x_2$ in the direction of $\theta_2$, where the ultrasound beam has been focused to concentrate its support at $x$.  

\begin{center}
\includegraphics[scale=0.5]{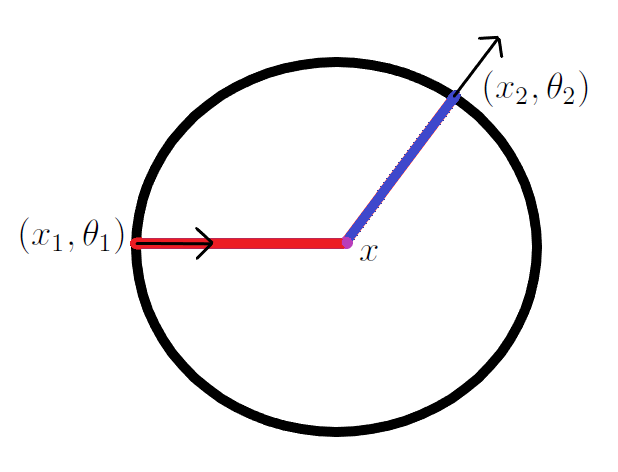}
\end{center}

\small Figure 1: $H_{f,g}(x)$ represents light from the point source $(x_1,\theta_1)$, which is scattered and 

frequency shifted from $x$, and observed at $(x_2,\theta_2)$.

\vspace{4mm}

Thus we could expect that the leading term in the functional will represent light travelling straight from $x_1$ to $x$, scattering once at $x$ in the direction of $\theta_2$, and exiting at $x_2$.  

Indeed, this is what we see when we expand $u_{00}$ and $v$ in terms of the collision expansion \eqref{SingularCollisionExpansion}.  Since $\theta_1 \neq \theta_2$, the leading term $JfJ^*g$ is zero, so the dominant terms of $H_{f,g}$ are 
\[
\int_{S^2} (Jf K^*J^*g + KJf J^*g) d\theta.
\]
Each of these terms represents a distribution supported on a codimension one set multiplied by one supported on a codimension four set.  Expanding out $Jf K^{*}J^{*}g$ at $x$ using \eqref{JExplicit} and \eqref{Kexplicit} gives
\[
\int_{S^2}Jf K^{*}J^{*}g \, d\theta = k(x, -\theta_1, -\theta_2)\exp(-\tau(x,x_1) - \tau(x, x_2))\delta_x(x),
\]
where $\delta_x(x)$ reflects the size of the distribution at $x$. Similarly, 
\[
\int_{S^2}KJf J^{*}g \, d\theta = k(x, \theta_2, \theta_1)\exp(-\tau(x,x_1) - \tau(x, x_2))\delta_x(x).
\]
By \eqref{Isotropik}, these terms are identical, so to leading order, and ignoring the $\delta_x(x)$ factor, we get 
\begin{equation}\label{BrokenRay}
H_{f,g}(x) \simeq 2k(x, \theta_2, \theta_1)\exp(-\tau(x,x_1) - \tau(x,x_2)).
\end{equation}
In other words, $H_{f,g}$ gives the scattering factor at $x$ between the angles $\theta_1$ and $\theta_2$, times the total attenuation from $x_1$ to $x$ to $x_2$, which is exactly we would expect from the discussion above Figure 1.  

If $\tau(x,y)$ is known for all pairs $(x,y)$, then $k(x,\theta_2, \theta_1)$ can be read off directly from this formula. If not, then we can set $x_1 = \gamma_-(x,\theta_1)$ and $x_2 = \gamma_+(x,\theta_1)$, and measure two functionals
\begin{eqnarray*}
H_1 = H_{\delta_{x_1,\theta_1},\delta_{x_1, \theta_1}}(x) &\simeq& 2k(x, \theta_1, -\theta_1)\exp(-2\tau(x,x_1)) \\
H_2 = H_{\delta_{x_2,-\theta_1},\delta_{x_2, -\theta_1}}(x) &\simeq& 2k(x, \theta_1, -\theta_1)\exp(-2\tau(x,x_2)), \\
\end{eqnarray*}
and the additional quantity 
\[
H_3 = \exp(-\tau(x_1,x_2))
\]
which can be obtained from the albedo map $\mathcal{A}^{00}$ for $u_{00}$, applied to the point source $\delta_{x_1,\theta_1}$. 

By the additivity of $\tau$, we have $\tau(x_1,x_2) = \tau(x,x_1) + \tau(x,x_2)$, so we get 
\[
\tau(x,x_1) = \half(\log H_1 - \log H_2 + \log H_3).
\]
Repeating the exercise for different $x$, and using the additivity of $\tau$, if necessary, gives any desired value of $\tau(x,y)$.

Differentiating $\tau$ gives $\sigma(x)$, so this discussion tells us that we can recover $\sigma$ and $k$ completely from the functional $H$.  On the other hand, using the methods described above mean that in order to completely recover $\sigma$ and $k$, we need to consider all possible point sources, which means we need four dimensions worth of sources.  We can improve this slightly by taking plane sources instead.

\subsection{Plane Sources}

Fix a $\theta_0 \in S^{2}$, and fix a plane $P$ parallel to $\theta_0$ which intersects the set $\{x \in \partial X| (x, \theta_0) \in \Gamma_-\}$.  Let $\delta_{P,\theta_0}$ be a distribution supported on the set $P' = \{(x,\theta) \in \Gamma_-\| x \in P, \theta = \theta_0\}$, so that 
\[
\int_{\partial X \times S^2} \delta_{P, \theta_0} f = \int_{P'} f(x, \theta_0) 
\]
for any $f \in C^{\infty}(\partial X \times S^2)$. In other words, $\delta_{P, \theta_0}$ is a distribution supported on a codimension three subset of the four dimensional set $\partial X \times S^2$, and if we view $\delta_{P, \theta_0}$ as a boundary source for the RTE and write out the collision expansion
\begin{equation}\label{PlaneCollisionExpansion}
u = J\delta_{P, \theta_0} + KJ\delta_{P, \theta_0} + K^2 J\delta_{P, \theta_0} + \ldots, 
\end{equation}
the leading term $J\delta_{P, \theta_0}$ is a distribution supported on a codimension 3 subset of the (five dimensional) domain $X \times S^2$.  

Since $K$ integrates in one spatial dimension and two angular dimensions, $KJ\delta_{P, \theta_0}$ is supported everywhere.  However, it is not actually a function: consider 
\[
KJ\delta_{P, \theta_0}(x,\theta) = T^{-1}A_2 J\delta_{P, \theta_0}(x,\theta)
\] 
for $x \in P$ and $\theta$ parallel to $P$.  Then the spatial integral in $T^{-1}$ is an integral along a line fully contained in $P$, so this spatial integral does not reduce the singularity of the distribution $A_2 J\delta_{P, \theta_0}(x,\theta)$.  Therefore $KJ\delta_{P, \theta_0}(x,\theta)$ can be thought of as a function supported on $X \times S^2$ plus a distribution supported on the codimension one set $P \times S^2$.  

Now given the choice of $\theta_1 \in S^2$ and $P$ parallel to $\theta_1$, pick $(x_2,\theta_2) \in \Gamma_+$ so that $x_2$ lies in $P$ and $\theta_2$ is parallel to $P$, with $\theta_1 \neq \theta_2$.  

We define corresponding boundary sources $f = \delta_{P, \theta_1}$ and $g = \delta_{x_2, \theta_2}$, and consider the resulting functional $H_{f,g}(x)$ at any point $x$ in the line through $x_2$ in direction $-\theta_2$.  As in the point source case, the ballistic terms multiply to give zero. By the above discussion, the leading term for what remains is the 
\[
KJ\delta_{P, \theta_1}(x,\theta) J^{*}\delta_{x_2, \theta_2}
\]
term; this represents a codimension four distribution multiplied by a codimension one distribution.  Expanding using \eqref{JExplicit} and \eqref{Kexplicit} gives
\begin{equation}\label{PointPlane}
H_{f,g}(x) \simeq \int_0^{|x-\gamma_-(x,\theta_2)|}e^{-\tau(\gamma_+(x,\theta_2),x-t\theta_2)-\tau(x-t\theta_2,\gamma_-(x-t\theta_2,\theta_1))}k(x-t\theta_2,\theta_2,\theta_1)dt \delta_x(x)
\end{equation}

\begin{center}
\includegraphics[scale=0.5]{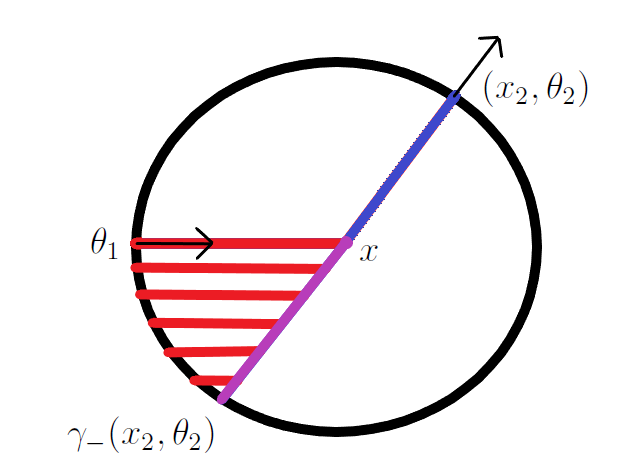}
\end{center}

{\small Figure 2: $H_{f,g}(x)$ represents the light from the plane source $\delta_{P,\theta_1}x_1$, scattered and 

frequency-shifted along the line from $\gamma_-(x,\theta_2)$ to $x$, and thence transmitted to $(x_2,\theta_2)$.  }

\vspace{4mm}

Ignoring the $\delta_x(x)$ factor and taking the directional derivative in direction $\theta_2$, we get 
\[
\theta_2 \cdot \grad H_{f,g}(x) \simeq k(x, \theta_2, \theta_1)\exp(-\tau(x,x_1)-\tau(x,x_2)).
\]
which is just \eqref{BrokenRay}, and so the remainder of the reconstruction proceeds as in the point source case.

Note that for each plane source $\delta_{P, \theta_1}$, we can, by varying $x_2$ and $\theta_2$, recover a two dimensional collection of $k(x, \theta_1, \theta_2)$.  Therefore in this scenario only two dimensions worth of sources are needed to recover all of $k$ and $\sigma$.  

In fact it's possible to do better than this: we can restrict ourselves to using a single dimension worth of sources, if we use an angularly singular source like $\delta_{\theta_1}$ and multiply by a rapidly oscillating function.  This brings us to the proof of Theorem \ref{MainTheorem}.

\section{Reconstruction and Stability}

\subsection{Proof of Theorem \ref{MainTheorem}}
We begin by defining the following $L^{\infty}$ approximation to the delta function on $S^2$. 
\[
\delta^h_{\theta_1}(\theta) = \left\{ \begin{array}{ll} h^{-2} & \mbox{ if } |\theta - \theta_1| < h \\ 
                                                        0      & \mbox{ otherwise. }  \end{array} \right.
\]
Following the discussion at the end of the previous section, we want to multiply by a function that oscillates rapidly in the spatial directions perpendicular to $\theta_1$.  To do this, fix $\theta_1 \in S^2$, and $\theta_3$ perpendicular to $\theta_1$. Pick coordinates for $x$ such that $\theta_1 = \hat{x}_1$ and $\theta_3 = \hat{x}_3$.

Let 
\begin{equation}\label{OscillationSource}
f_h^{\theta_1}(x,\theta) = \delta^h_{\theta_1}(\theta)\exp(ix_3/h).
\end{equation}
This complex source is not physical, but it can be recreated formally by measuring the real and imaginary parts separately. (The advantage of using the complex exponential instead of a simple sine or cosine function is mostly formal anyway -- it prevents the boundary source from going to zero.) Using the collision expansion, we claim the following qualitative properties for the solutions of the RTE with boundary source $f$. 

\begin{lemma}\label{PlaneOscillation}
Suppose $f = f_h^{\theta_1}$ is as defined in \eqref{OscillationSource}, and $u$ is the solution to the RTE \eqref{ShortRTE} with boundary condition $u|_{\Gamma_-} = f$.  Then $u = Jf + KJf + R$, where 

\begin{itemize}
\item The ballistic term $Jf$ satisfies the estimates
\[
\|Jf\|_{L^{\infty}(X \times S^2)} = O(h^{-2}) \mbox{ and for any fixed } x, \quad  \|Jf(x,\cdot)\|_{L^{1}(S^2)} = O(1);
\]

\item The single scattering term $KJg$ satisfies the estimates
\[
\|KJf\|_{L^{\infty}(X \times S^2)} = O(1) \mbox{ and for any fixed } x, \quad  \|KJf(x,\cdot)\|_{L^{1}(S^2)} = o(1);
\]

\item and the remainder satisfies the estimate
\[
\|R\|_{L^{\infty}(X \times S^2)} = o(1).
\]
\end{itemize}

\end{lemma}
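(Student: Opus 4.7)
The plan is to bound each term directly from the explicit formulas \eqref{JExplicit}, \eqref{TInverse}, \eqref{Kexplicit} together with the operator estimates in Lemma \ref{KEstimates}. The ballistic estimates are nearly immediate: the pointwise bound $|f_h^{\theta_1}(x,\theta)| \le h^{-2}\mathbf{1}_{|\theta-\theta_1|<h}$ combined with the fact that the attenuation factor in \eqref{JExplicit} has modulus at most one gives $\|Jf\|_{L^\infty} \le h^{-2}$, while for fixed $x$ the angular support of $Jf(x,\cdot)$ has $S^2$-measure $\lesssim h^2$, yielding $\|Jf(x,\cdot)\|_{L^1(S^2)} = O(1)$. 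For the $L^\infty$ bound on the single-scattering term, I would chain the two inequalities of Lemma \ref{KEstimates}: $\|A_2(Jf)(x,\cdot)\|_{L^\infty} \le C_k\|Jf(x,\cdot)\|_{L^1(S^2)} = O(1)$, after which $T^{-1}$ preserves $L^\infty$ with constant at most one, giving $\|KJf\|_{L^\infty} = O(1)$.

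The main content of the lemma is the $L^1$-in-$\theta$ estimate for $KJf$, which is where the oscillating boundary factor $e^{ix_3/h}$ becomes essential. My first step would be to substitute the explicit formula for $Jf$ into $A_2 Jf(y,\theta)$ and rescale $\theta' = \theta_1 + h\eta$ (so $d\theta' \sim h^2\,d\eta$ on the sphere). Because $\theta_1 = \hat{x}_1$ is orthogonal to the oscillation direction $\hat{x}_3$, one has $\theta_3' = h\eta_3 + O(h^2)$, so
\[
\bigl(\gamma_-(y,\theta')\bigr)_3 - y_3 = -|y-\gamma_-(y,\theta')|\,\theta_3' = -h\,|y-\gamma_-(y,\theta_1+h\eta)|\,\eta_3 + O(h^2),
\]
and the rapid phase factorizes as $e^{iy_3/h}\cdot e^{-i|y-\gamma_-(y,\theta_1+h\eta)|\eta_3}(1+O(h))$, which is bounded in $\eta$. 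Using continuity of $k$, $\tau$, and $\gamma_-$ to pass to the limit inside the $\eta$-integral produces $A_2 Jf(y,\theta) = e^{iy_3/h}\bigl[G(y,\theta) + o(1)\bigr]$ uniformly in $(y,\theta)$, with $G$ bounded and continuous. Substituting into $T^{-1}$ then gives the model form
\[
KJf(x,\theta) = e^{ix_3/h}\int_0^{|x-\gamma_-(x,\theta)|} e^{-\tau(x,x-t\theta)}\,G(x-t\theta,\theta)\,e^{-it\theta_3/h}\,dt + o(1).
\]

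To extract the $L^1(S^2)$ bound I would split $S^2$ into an equatorial band $\{|\theta_3|<\epsilon\}$ and its complement. The band contributes at most $C\epsilon$ via the $L^\infty$ bound from the previous step. On the complement the $t$-integral is oscillatory with frequency $\theta_3/h$, and the amplitudes form a uniformly bounded, equicontinuous family thanks to continuity of $\tau$, $G$, and $\gamma_-$ on the compact set $\overline{X}\times S^2$. Approximating each amplitude by a step function on intervals of length $\Delta$ and evaluating the elementary oscillatory integral on each piece yields an error of the form $\omega(\Delta) + O\bigl(h/(\Delta\epsilon)\bigr)$, uniform over $x$ and over $\{|\theta_3|\ge\epsilon\}$, where $\omega$ is the common modulus of continuity; balancing $\Delta$ gives decay to zero as $h\to 0$ for each fixed $\epsilon$. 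Taking $\limsup_{h\to 0}$ and then $\epsilon\to 0$ produces $\sup_{x}\|KJf(x,\cdot)\|_{L^1(S^2)} = o(1)$.

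The remainder estimate follows from the $L^1$ bound just established. Writing $R = \sum_{m\ge 2}K^m Jf$ and applying Lemma \ref{KEstimates} to $K^2 Jf = T^{-1}A_2(KJf)$ gives $\|K^2 Jf\|_{L^\infty} \lesssim \sup_x\|KJf(x,\cdot)\|_{L^1(S^2)} = o(1)$; the operator bound from Proposition \ref{CollisionExpansion} then yields $\|K^m Jf\|_{L^\infty} \le C^{m-2}\|K^2 Jf\|_{L^\infty}$ for $m\ge 3$, and summing the geometric series produces $\|R\|_{L^\infty} = o(1)$. I expect the main technical hurdle to be the uniform-in-$(x,\theta)$ Riemann--Lebesgue step of the third paragraph: establishing it carefully with only continuity of $k$ (rather than $C^1$ regularity) requires exploiting the Arzel\`a--Ascoli-type equicontinuity of the amplitude family and trading smoothness for a modulus-of-continuity argument, as sketched.
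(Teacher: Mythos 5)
Your proposal is correct and follows essentially the same route as the paper: ballistic bounds from the explicit formula for $J$, the $L^\infty$ bound on $KJf$ by chaining the two estimates of Lemma \ref{KEstimates}, the $L^1$-in-$\theta$ bound via the factored phase $e^{ix_3/h}$ and a Riemann--Lebesgue argument off an equatorial band, and the remainder summed geometrically using $\|K\|<C<1$. Your rescaling $\theta'=\theta_1+h\eta$ and the equicontinuity/step-function argument simply make rigorous and uniform (in $x$ and $\theta$) the localization and oscillation steps that the paper handles by citing the Lebesgue differentiation theorem and Riemann--Lebesgue informally, and you correctly track the bounded phase-averaging factor absorbed into $G$ that the paper's $o(1)$ glosses over.
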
 

\begin{proof}
The estimates for $Jf$ follow straight from the definitions of $J$ and $f$.  Then the $L^{\infty}$ norm for $KJf$ follows from Lemma \ref{KEstimates} and the $L^1$ estimate for $Jf$.  

Now 
\[
Jf(x,\theta) = e^{-\tau(x,\gamma_-(x,\theta))}\delta^h_{\theta_1}(\theta)\exp(i\hat{x}_3 \cdot \gamma_-(x,\theta)/h).
\]
Therefore
\[
A_2 Jf(x,\theta) = \int_{S^2} k(x,\theta,\theta ')e^{-\tau(x,\gamma_-(x,\theta'))}\delta^h_{\theta_1}(\theta')\exp(i\hat{x}_3 \cdot \gamma_-(x,\theta')/h) \, d\theta'.
\]
Since $\delta^h_{\theta_1}$ is supported only for $\theta$ in a small neighbourhood of $\theta_1$, the Lebesgue differentiation theorem guarantees that for sufficiently small $h$, we get
\[
A_2 Jf(x,\theta) = e^{-\tau(x,\gamma_-(x,\theta_1))}k(x,\theta, \theta_1)\exp(i\hat{x}_3 \cdot \gamma_-(x,\theta_1)/h) + o(1).
\]
Since $\theta_1$ is perpendicular to $\hat{x}_3$, we get 
\[
A_2 Jf(x,\theta) = e^{-\tau(x,\gamma_-(x,\theta_1))}k(x,\theta, \theta_1)\exp(ix_3/h) + o(1).
\]
Now we can write $KJf$ as 
\[
T^{-1}A_2Jf(x,\theta) = \int_0^{|x-\gamma_-(x,\theta)|}e^{-\tau(x,x-t\theta)}A_2Jf(x - t\theta, \theta)dt.
\]
If $\theta \cdot \hat{x}_3 \gg h$, then $A_2Jf(x - t\theta, \theta)$ is highly oscillatory as a function of $t$, and so by the Riemann-Lebesgue lemma,
\[
|KJf(x,\theta)| = o(1).
\]
Then it follows that 
\[
\|KJf(x, \cdot)\|_{L^1(S^2)} = o(1),
\]
and the estimate for $R$ follows from Lemma \ref{KEstimates}.

\end{proof}

We want to look at the functional $H_{f_h^{\theta_1},g_h^{\theta_2}}$ defined by $f_h^{\theta_1}$ and a boundary function $g_h^{\theta_2}$ which approximates a point source.  To define $g_h^{\theta_2}$,  we'll begin by defining the approximation to the delta function on the boundary: for $x_0 \in \partial X$, define 
\[
\delta^h_{x_0}(x) = \left\{ \begin{array}{ll} h^{-2} & \mbox{ if } |x - x_0| < h \\ 
                                              0      & \mbox{ otherwise }  \end{array} \right.
\]
Pick $\theta_2 \in S^2$ so $\theta_2$ is perpendicular to $\hat{x}_3$, and let 
\begin{equation}\label{ThePointSource}
g_h^{\theta_2}(x,\theta) = h^2 \delta^h_{\theta_2}(\theta)\delta^h_{x_0}(x).  
\end{equation}

\begin{lemma}\label{PointEstimates}
Let $g = g_h^{\theta_2}$ be defined by \eqref{ThePointSource}, and let $v$ solve the adjoint RTE \eqref{AdjointRTE} with boundary condition $v|_{\Gamma_+} = g|_{\Gamma_+}$. Then
\[
v = J^{*}g + K^*J^*g + R^*,
\]
where

\begin{itemize}
\item The ballistic term $J^*g$ satisfies the estimates
\[
\|J^*g\|_{L^{\infty}(X \times S^2)} = O(h^{-2}) \mbox{ and for any fixed } x, \quad  \|J^*g(x,\cdot)\|_{L^{1}(S^2)} = O(1);
\]

\item The single scattering term $K^*J^*g$ satisfies the estimates
\[
\|K^*J^*g\|_{L^{\infty}(X \times S^2)} = O(1) \mbox{ and for any fixed } x, \quad  \|K^*J^*g(x,\cdot)\|_{L^{1}(S^2)} = o(1);
\]

\item and the remainder satisfies the estimate
\[
\|R^*\|_{L^{\infty}(X \times S^2)} = o(1).
\]
\end{itemize}

\end{lemma}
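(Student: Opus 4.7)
The argument will mirror that of Lemma~\ref{PlaneOscillation}, with the spatial concentration of $g_h^{\theta_2}$ playing the role that the rapid oscillation of $f_h^{\theta_1}$ played there.  The ballistic bounds are immediate from the explicit formula $J^*g(x,\theta) = e^{-\tau(x,\gamma_+(x,\theta))}\,g(\gamma_+(x,\theta),\theta)$: the $L^{\infty}$ bound follows from $\|g\|_{L^{\infty}} = h^{-2}$, and since $J^*g(x,\cdot)$ vanishes outside the $h$-ball of $\theta_2$, which has $S^2$-measure $O(h^2)$, the pointwise $L^1(S^2)$ norm is $O(1)$.  Applying Lemma~\ref{KEstimates}, which gives $\|K^* w\|_{L^{\infty}} \lesssim \sup_x \|w(x,\cdot)\|_{L^1(S^2)}$, then yields $\|K^*J^*g\|_{L^{\infty}} = O(1)$.

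The main step is the $L^1(S^2)$ bound on $K^*J^*g$ at fixed $x$.  Writing out $T^{*-1}$ and bounding $|A_2 J^*g(y,\theta)| \lesssim F(y) := \|J^*g(y,\cdot)\|_{L^1(S^2)}$ via Lemma~\ref{KEstimates}, then changing variables from $(s,\theta) \in [0,\infty) \times S^2$ to $y = x + s\theta$ with Jacobian $|y-x|^{-2}$, I get
\[
\|K^*J^*g(x,\cdot)\|_{L^1(S^2)} \;\lesssim\; \int_X F(y)\,\frac{dy}{|y-x|^2}.
\]
By the ballistic step, $\|F\|_{L^{\infty}(X)} = O(1)$; and by Fubini along the backward rays defining $J^*$,
\[
\|F\|_{L^1(X)} \;=\; \|J^*g\|_{L^1(X \times S^2)} \;\lesssim\; \|g\|_{L^1(\partial X \times S^2)} \;=\; O(h^2).
\]
Interpolation gives $\|F\|_{L^p(X)} = O(h^{2/p})$ for every $p \in [1,\infty]$, and since $|y-x|^{-2} \in L^q(X)$ uniformly in $x$ for every $q < 3/2$, H\"older's inequality with any $p > 3$ yields $\int_X F(y)\,|y-x|^{-2}\,dy = O(h^{2/p}) = o(1)$, uniformly in $x \in X$.

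For the remainder, $R^* = \sum_{k \geq 2} (K^*)^k J^*g$, and since $\|K^*\|_{L^{\infty} \to L^{\infty}} < 1$ by Proposition~\ref{CollisionExpansion}, $\|R^*\|_{L^{\infty}} \lesssim \|(K^*)^2 J^*g\|_{L^{\infty}} \lesssim \sup_x \|K^*J^*g(x,\cdot)\|_{L^1(S^2)} = o(1)$ by the previous step and Lemma~\ref{KEstimates}.  The main obstacle is the non-integrability of the weight $|y-x|^{-2}$, which rules out a crude bound using $\|F\|_{L^{\infty}}$ alone; it is essential to combine the uniform $L^{\infty}$ bound on $F$ with the $L^1$ smallness $\|F\|_{L^1} = O(h^2)$---which encodes that the effective spatial support of $J^*g$ has volume $O(h^2)$---via interpolation and H\"older, in order to obtain an $o(1)$ bound that is uniform in $x$ and hence strong enough to control the remainder in $L^{\infty}$.
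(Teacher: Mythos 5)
Your proof is correct, but the key step goes by a genuinely different route than the paper's. For the crucial bound $\|K^*J^*g(x,\cdot)\|_{L^1(S^2)} = o(1)$, the paper argues geometrically: $J^*g$ is supported in an $O(h)$-tube around the ray from $x_0$ in direction $\theta_2$, so for $|\theta - \theta_2| \gg h$ the line integral in $T^{*-1}$ meets that tube in a segment of length $O(h)$, giving the pointwise estimate \eqref{LocalSecondTermEstimate}, $K^*J^*g(x,\theta) = O(h)$ away from $\theta_2$, from which the angular $L^1$ smallness follows. You instead convert the angular $L^1$ norm into a spatial integral against the kernel $|y-x|^{-2}$ via the polar change of variables, and combine $\|F\|_{L^\infty} = O(1)$ with $\|F\|_{L^1(X)} = O(h^2)$ (the Fubini-along-rays bound $\|J^*g\|_{L^1(X\times S^2)} \lesssim \|g\|_{L^1}$ is sound, since the ray parametrization carries the factor $|\theta\cdot\nu|\,dS\,dt \leq dS\,dt$) through interpolation and H\"older with $p > 3$. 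Both arguments are valid; yours is more robust and softer, using only the $L^1$/$L^\infty$ sizes of the ballistic term rather than the tube geometry, and it correctly isolates why spatial concentration substitutes here for the oscillation used in Lemma \ref{PlaneOscillation} (your method would rightly fail for $f_h^{\theta_1}$, where $\|Jf\|_{L^1(X\times S^2)}$ is not small). The trade-offs: your rate is $O(h^{2/p})$, at best $O(h^{2/3-\e})$, weaker than the paper's effective $O(h)$, though ample for the $o(1)$ claim; more importantly, the paper's proof produces the pointwise byproduct \eqref{LocalSecondTermEstimate}, which is cited again in Section 5 to show that the cross term $\int_{S^2} Jf\, K^*J^*g \, d\theta$ is $O(h)$ when $|\theta_1 - \theta_2| \gg h$. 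Your argument proves the lemma as stated but does not yield that pointwise estimate, so if one adopted your proof wholesale, the later $O(h)$ step in the main reconstruction would need a separate (short, tube-geometry) argument.
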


\begin{proof}
The estimates for $J^*g$ and the $L^{\infty}$ estimate for $K^*J^*g$ are obtained in the same manner as in Lemma \ref{PlaneOscillation}.  To get the $L^1$ estimate for $K^*J^*g$, note that $J^*g(x,\theta)$ is only supported for $x$ within $O(h)$ distance of the line from $x_0$ in direction $\theta_2$.  Therefore $A_2^*J^*g(x,\theta)$ is only supported for $x$ within $O(h)$ distance of this line. Then for $\theta$ such that $|\theta- \theta_2| \gg h$, 
\[
K^*J^*g(x,\theta) = T^{*-1}A_2^*J^*g(x,\theta) = \int_0^{|x-\gamma_+(x,\theta)|}e^{-\tau(x,x+t\theta)}A_2^*J^*f(x + t\theta, \theta)dt
\]
and the integrand is supported only in an $O(h)$ segment of the line.  Therefore
\begin{equation}\label{LocalSecondTermEstimate}
K^*J^*g(x,\theta) = O(h)
\end{equation}
for $|\theta- \theta_2| \gg h$, and the $L^1$ estimate for $K^*J^*g$ follows.  

The estimate for $R^*$ now follows from Lemma \ref{KEstimates}.
\end{proof}

Now let's consider the functional $H_{f,g}$ obtained from the sources $f$ and $g$ described above.  Using Lemmas \ref{PlaneOscillation} and \ref{PointEstimates} respectively, we can expand the functional as

\begin{eqnarray*}
H_{f,g} &=& \int_{S^2} Jf J^*g \, d\theta + \int_{S^2} KJf J^*g \, d\theta + \int_{S^2} R J^*g \, d\theta \\
	      & & \int_{S^2} Jf K^*J^*g \, d\theta + \int_{S^2} KJf K^*J^*g \, d\theta + \int_{S^2} R K^*J^*g \, d\theta \\
        & & \int_{S^2} Jf R^* \, d\theta + \int_{S^2} KJf R^* \, d\theta + \int_{S^2} R R^* \, d\theta. \\
\end{eqnarray*}

Assuming that $|\theta_1 - \theta_2| \gg h$, the first term consists of two functions angularly supported on disjoint subsets of $S^2$, so it vanishes. Moreover, applying Lemmas \ref{PlaneOscillation} and \ref{PointEstimates} shows that six of the remaining terms are $o(1)$ at best. What remains is 
\[
H_{f,g} = \int_{S^2} KJf J^*g \, d\theta + \int_{S^2} Jf K^* J^* g \, d\theta + o(1).  
\]
But the $\int_{S^2} Jf K^* J^* g \, d\theta$ term isn't quite as big as advertised.  Assuming that $|\theta_1 - \theta_2| \gg h$, we have from \eqref{LocalSecondTermEstimate} that $K^* J^* g(x,\theta) = O(h)$ for $\theta$ in the support of $Jf$.  Therefore this term is $O(h)$, and as a result we are left with
\[
H_{f,g} = \int_{S^2} KJf J^*g \, d\theta + o(1).  
\]
Now 
\[
J^*g(x,\theta) = e^{-\tau(x,\gamma_+(x,\theta))} h^2\delta^h_{\theta_2}(\theta)\delta^h_{x_0}(\gamma_+(x,\theta)).
\]
Therefore
\[
H_{f,g}(x) = e^{-\tau(x,\gamma_+(x,\theta_2))} h^2 \delta^h_{x_0}(\gamma_+(x,\theta_2)) KJf(x,\theta_2) + o(1).  
\]
For $x$ such that $\gamma_+(x,\theta_2)$ is in the support of $\delta^h_{x_0}$, we can write
\begin{equation}\label{H5}
H_{f,g}(x) = e^{-\tau(x,\gamma_+(x,\theta_2))} KJf(x,\theta_2) + o(1).  
\end{equation}
Meanwhile
\[
Jf(x,\theta) = e^{-\tau(x,\gamma_-(x,\theta))}\delta^h_{\theta_1}(\theta)\exp(i\hat{x}_3 \cdot \gamma_-(x,\theta)/h).
\]
so integrating against the scattering kernel gives
\[
A_2Jf(x, \theta_2) = e^{-\tau(x,\gamma_-(x,\theta_1))}\exp(i\hat{x}_3 \cdot \gamma_-(x,\theta_1)/h)k(x,\theta_2,\theta_1) + o(1).
\]
Since $\theta_1$ is perpendicular to $\hat{x}_3$, 
\[
A_2Jf(x, \theta_2) = e^{-\tau(x,\gamma_-(x,\theta_1))}\exp(ix_3/h)k(x,\theta_2,\theta_1) + o(1).
\]
Now $K = T^{-1}A_2$, so 
\[
KJf(x,\theta_2) = \int_0^{|x-\gamma_-(x,\theta_2)|}e^{-\tau(x,x-t\theta_2)}A_2Jf(x - t\theta_2, \theta_2)dt + o(1).
\]
Substituting this into \eqref{H5} gives
\begin{eqnarray*}
H_{f,g}(x) &=& e^{-\tau(x,\gamma_+(x,\theta_2))} \cdot \\
           & & \int_0^{|x-\gamma_-(x,\theta_2)|}e^{-\tau(x,x-t\theta_2)-\tau(x-t\theta_2,\gamma_-(x-t\theta_2,\theta_1))}e^{i\hat{x}_3 \cdot (x -t\theta_2)/h}k(x-t\theta_2,\theta_2,\theta_1)dt \\
				   & & + o(1). \\ 
\end{eqnarray*}
Since $\theta_2$ is also perpendicular to $\hat{x}_3$, we can rewrite $\exp(i\hat{x}_3 \cdot (x -t\theta_2)/h) = \exp(ix_3/h)$. In fact, since $x$ is known, $\exp(ix_3/h)$ is also known, and we may as well assume that this is $1$.  Then we can write 
\[
H_{f,g}(x) = e^{-\tau(x,\gamma_+(x,\theta_2))}\int_0^{|x-\gamma_-(x,\theta_2)|}e^{-\tau(x,x-t\theta_2)}e^{-\tau(x-t\theta_2,\gamma_-(x-t\theta_2,\theta_1))}k(x-t\theta_2,\theta_2,\theta_1)dt + o(1).  
\]
Combining the remaining exponentials, we get
\[
H_{f,g}(x) = \int_0^{|x-\gamma_-(x,\theta_2)|}e^{-\tau(\gamma_+(x,\theta_2),x-t\theta_2)-\tau(x-t\theta_2,\gamma_-(x-t\theta_2,\theta_1))}k(x-t\theta_2,\theta_2,\theta_1)dt + o(1).  
\]
Up to the $o(1)$ error, note that this is precisely equation \eqref{PointPlane}, and has the same interpretation in terms of Figure 2.  

If we now consider $H_{f,g}(x - s\theta_2)$, for some parameter $s$, then using the expression above, we can write $H_{f,g}(x - s\theta_2)$ as
\[
\int_0^{|x-\gamma_-(x,\theta_2)|-s}e^{-\tau(\gamma_+(x,\theta_2),x-(t+s)\theta_2)-\tau(x-(t+s)\theta_2,\gamma_-(x-t\theta_2,\theta_1))}k(x-(t+s)\theta_2,\theta_2,\theta_1)dt + o(1).  
\]
Changing variables, we get
\[
\int_s^{|x-\gamma_-(x,\theta_2)|}e^{-\tau(\gamma_+(x,\theta_2),x-t\theta_2)-\tau(x-t\theta_2,\gamma_-(x-t\theta_2,\theta_1))}k(x-t\theta_2,\theta_2,\theta_1)dt + o(1).  
\]
If we take a difference quotient with respect to $s$, we get 
\begin{eqnarray*}
&  & \frac{H_{f,g}(x) - H_{f,g}(x - s\theta_2)}{s}  \\
&=& \frac{1}{s}\int_{0}^{s}e^{-\tau(\gamma_+(x,\theta_2),x-t\theta_2)-\tau(x-t\theta_2,\gamma_-(x-t\theta_2,\theta_1))}k(x-t\theta_2,\theta_2,\theta_1)dt + \frac{o_h(1)}{s}. \\
\end{eqnarray*}
Here we are emphasizing that the $o(1)$ term at the end is $o(1)$ as $h \rightarrow 0$.  If we take $0 < h \ll s \ll 1$ small, we get 
\begin{equation}\label{BrokenRayII}
\theta_2 \cdot \grad H_{f,g}(x) = e^{-\tau(\gamma_+(x,\theta_2),x)-\tau(x,\gamma_-(x,\theta_1))}k(x,\theta_2,\theta_1) + o_s(1).
\end{equation}
This is exactly the same quantity that we recovered in \eqref{BrokenRay} in the point source case, and the rest of the recovery proceeds exactly as in Section 4.1.  It helps to introduce the notation
\begin{eqnarray*}
F(x,\theta_1,\theta_2) &=& e^{-\tau(\gamma_+(x,\theta_2),x)-\tau(x,\gamma_-(x,\theta_1))}k(x,\theta_2,\theta_1) \\
                       &=& \theta_2 \cdot \grad H_{f^{\theta_1}_h,g^{\theta_2}_h}(x)+ o(1).\\
\end{eqnarray*}
to express equation \eqref{BrokenRayII}. Then explicitly, the discussion at the end of Section 4.1 implies that
\begin{equation}\label{TauReconstruction}
\tau(x,\gamma_-(x,\theta_1)) = \half(\log F(x, \theta_1,-\theta_1) -\log F(x, -\theta_1,\theta_1) +\log \mathcal{A}_{00}(f)(\gamma_+(x,\theta_1))),
\end{equation}
and
\begin{equation}\label{KReconstruction}
k(x,\theta_2,\theta_1) = F(x,\theta_1,\theta_2)e^{+\tau(\gamma_+(x,\theta_2),x)+\tau(x,\gamma_-(x,\theta_1))}.
\end{equation}

Note that if $\theta_1$ is fixed, then for a single boundary source parametrized by a choice of $\hat{x}_3$, we can, by changing $v$, obtain $k(x,\theta_2,\theta_1)$ for all $x$ and all $\theta_2$ perpendicular to $\hat{x}_3$.  By rotating the choice of $\hat{x}_3$, we can then obtain $k(x,\theta_2,\theta_1)$ for all $x$ and $\theta_2$.  Then \eqref{Isotropik} guarantees that we recover all $k(x,\theta_1, \theta_2)$.  This finishes the proof of Theorem \ref{MainTheorem}.

\subsection{Stability Estimates}
Equations \eqref{TauReconstruction} and \eqref{KReconstruction}, combined with \eqref{BrokenRayII}, immediately give us the following stability estimates.

\begin{theorem}\label{StabilityEstimate}
Suppose $\sigma_1, k_1$, and $\sigma_2,k_2$ are two sets of coefficients giving rise to two functionals $H_1$ and $H_2$.  Then
\[
\|\sigma_1 - \sigma_2\|_{C(X)} \leq \half\|\log |\grad H_1| - \log |\grad H_2| \|_{C^1(X)}
\]
and
\[
\|k_1 - k_2\|_{C(X \times S^2 \times S^2} \leq \sup_{x,y \in X}\exp(2\tau(x,y))\|H_1 - H_2\|_{C^1(X)}.
\]
\end{theorem}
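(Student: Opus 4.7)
The plan is to derive both estimates by substituting \eqref{BrokenRayII} into the reconstruction formulas \eqref{TauReconstruction} and \eqref{KReconstruction}, taking differences between the two coefficient pairs, and applying the triangle inequality. The $o(1)$ and $o_s(1)$ remainders from \eqref{BrokenRayII} are treated as negligible, since they can be made arbitrarily small by choosing $h$ and $s$ small; the estimates are understood to hold in this leading-order sense.

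For the $\sigma$ estimate, I would first recover $\sigma$ from $\tau$ by directional differentiation. Since $\tau(x,\gamma_-(x,\theta_1)) = \int_0^{|x-\gamma_-(x,\theta_1)|}\sigma(x - s\theta_1)\, ds$, differentiating along the ray yields $\sigma(x) = \theta_1 \cdot \grad_x \tau(x, \gamma_-(x,\theta_1))$. Apply this to \eqref{TauReconstruction}; since the boundary points $\gamma_\pm(x,\theta_1)$ are invariant as $x$ slides along the $\theta_1$-ray, the $\log \mathcal{A}_{00}$ term has vanishing $\theta_1$-derivative and drops out, leaving
\[
\sigma_i(x) = \half \, \theta_1 \cdot \grad_x \bigl[\log F_i(x, \theta_1, -\theta_1) - \log F_i(x, -\theta_1, \theta_1)\bigr].
\]
Subtracting the two cases, substituting $F_i(x,\theta_1,\theta_2) = \theta_2 \cdot \grad H_i(x)$ from \eqref{BrokenRayII}, and taking supremum in $x$ produces the stated bound. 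The factor $\half$ is inherited from \eqref{TauReconstruction}, and the $C^1$ norm appears because the additional derivative $\theta_1 \cdot \grad_x$ is applied to the object $\log |\grad H_i|$.

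For the $k$ estimate, I would use \eqref{KReconstruction} written as
\[
k_i(x, \theta_2, \theta_1) = F_i(x, \theta_1, \theta_2) \, e^{\tau_i(\gamma_+(x,\theta_2),x) + \tau_i(x,\gamma_-(x,\theta_1))},
\]
and decompose $k_1 - k_2$ via add-and-subtract into $(F_1 - F_2)e^{E_1} + F_2(e^{E_1} - e^{E_2})$, where $E_i$ denotes the full exponent for coefficient pair $i$. The principal term is bounded by $\sup e^{2\tau} |F_1 - F_2|$, and since \eqref{BrokenRayII} gives $|F_1 - F_2| \le \|\grad(H_1 - H_2)\|_{C(X)} \le \|H_1 - H_2\|_{C^1(X)}$, this matches the stated bound. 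The cross term $F_2(e^{E_1} - e^{E_2})$ is handled by the mean value theorem for the exponential together with the stability of $\tau$ already established in the first estimate, and is of lower order relative to the principal contribution.

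The main obstacle is the bookkeeping of the cross term and the asymptotic remainders: the clean form of the estimates as stated should be read at leading order in the difference of coefficient pairs, with $h$ and the difference-quotient step $s$ driven to zero appropriately in \eqref{BrokenRayII}. Making the estimate fully quantitative would require explicit rates for the $o(1)$ terms in Lemmas \ref{PlaneOscillation} and \ref{PointEstimates}, together with a careful balance between $h$, $s$, and the size of $\|H_1 - H_2\|_{C^1}$ so that the subleading corrections do not overwhelm the reconstruction.
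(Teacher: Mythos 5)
Your proposal is correct and follows essentially the same route as the paper, which offers no written proof at all beyond asserting that the estimates follow ``immediately'' from \eqref{TauReconstruction}, \eqref{KReconstruction}, and \eqref{BrokenRayII} --- precisely the substitute-and-apply-triangle-inequality argument you carry out. If anything, your write-up is more careful than the paper's: you explicitly note the cross term $F_2(e^{E_1}-e^{E_2})$, which the paper's stated bound for $k_1-k_2$ silently drops, and you flag that the estimates hold only at leading order in the $h,s \to 0$ asymptotics.
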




\begin{thebibliography}{99}

\bibitem{ABGNS} H. Ammari, E. Bossy, J. Garnier, L.H. Nguyen, and L. Seppecher.  A reconstruction algorithm for ultrasound-modulated diffuse optical tomography. \emph{Proc Am. Math. Soc.} \textbf{142} (2014) 3221.

\bibitem{AmmBosGarSep} H. Ammari, E. Bossy, J. Garnier, and L. Seppecher. Acousto-electromagnetic tomography. \emph{SIAM J. App. Math.} \textbf{72} (2012), 1592-1617.

\bibitem{AmmGarNguSep} H. Ammari, J. Garnier, L.H. Nguyen, and L. Seppecher. Reconstruction  of  a  piecewise smooth absorption coefficient by an acousto-optic process. \emph{Comm. PDE}, \textbf{38} (2013) 1737-1762.

\bibitem{AmmNguSep} H. Ammari, L.H. Nguyen, and L. Seppecher. Reconstruction and stability in acousto-optic imaging for absorption maps with bounded variation. \emph{J. Funct. Anal.} \textbf{267}, (2014) 4361-4398.

\bibitem{BalChuSch}
G. Bal, F.J. Chung and J. C. Schotland. Ultrasound modulated bioluminescence tomography and controllability of the radiative transport equation. SIAM J. Math. Analysis {\bf 48}, 1332-1347 (2016).

\bibitem{BalMos} 
G. Bal. and S. Moskow. Local inversions in ultrasound-modulated optical tomography. \emph{Inv. Prob.}, \textbf{30} (2014) 025005.

\bibitem{BalSch} 
G. Bal and J. Schotland. Inverse scattering and acousto-optic imaging. \emph{Phys. Rev. Lett.} \textbf{104} (2010), 043902.

\bibitem{BalSch2} G. Bal and J. Schotland. Ultrasound modulated bioluminescence tomography. \emph{Phys. Rev. E} \textbf{89} (2014) 031101.

\bibitem{ChoSte}
M. Choulli and P. Stefanov. An inverse boundary value problem for the stationary transport equation, {Osaka J. Math.}, \textbf{36} (1999), p. 87-104.

\bibitem{ChuHosSch} F.J. Chung, J. Hoskins, and J. Schotland. Multifrequency interferometric acousto-optic tomography in highly-scattering media. Preprint.  

\bibitem{ChuSch}
F.J. Chung and J.C. Schotland. Inverse transport and acousto-optic imaging. \emph{SIAM J. Math. Anal.} \textbf{49}-6 (2017), pp. 4704-4721.

\bibitem{DauLio}
R. Dautray and J.-L. Lions. {Mathematical Analysis and Numerical Methods for Science and Technology.}  vol. 6, Springer Verlag, Berlin, 1993.

\bibitem{HosSch}
J.G. Hoskins and J.C. Schotland. Acousto-optic effect in random media. Preprint, arXiv:1612.03474.


\end{thebibliography}
\end{document}